\newtheorem{theorem}{Theorem}[section]
\newtheorem{lemma}[theorem]{Lemma}
\theoremstyle{definition} 
\newtheorem{definition}[theorem]{Definition} 
\newtheorem{remark}[theorem]{Remark}
\numberwithin{theorem}{section} \numberwithin{equation}{section}
\newcommand{\R}{{\mathbb R}}
\newcommand{\B}{{\mathbf B}}
\newcommand{\grad}{\nabla}
\newcommand{\tint}{\int_{t_1}^{t_2} \int_{\Omega}} 
\newcommand{\tauint}{\int_{\tau_1}^{\tau_2} \int_{\Omega}} 
\newcommand{\tsup}{\esssup_{t_1 < t < t_2} \int_{\Omega}}
\newcommand{\dx}{\, dx} 
\newcommand{\dt}{\, dt} 
\newcommand{\dxdt}{\dx\dt}
\DeclareMathOperator*{\essliminf}{ess\,lim\,inf} 
\DeclareMathOperator*{\essinf}{ess\,inf} 
\DeclareMathOperator*{\esssup}{ess\,sup}
\newcommand{\dif}[0]{\ensuremath{\,\mathrm{d}}} 
\newcommand{\norm}[1]{\ensuremath{\Vert #1 \Vert}}
\newcommand{\abs}[1]{\ensuremath{\vert #1 \vert}}
\newcommand{\lscss}{semicontinuous supersolution}
\begin{document} 
\title[Lower semicontinuity of supersolutions]{Lower semicontinuity of weak supersolutions to the porous medium equation}

\address{Benny Avelin\\Department of Mathematics, Uppsala University\\
S-751 06 Uppsala, Sweden} \email{benny.avelin@math.uu.se}

\address{Teemu Lukkari\\
Department of Mathematics and Statistics, University of Jyväskylä\\
P.O. Box 35, 40014 Jyväskylä, Finland} \email{teemu.j.lukkari@jyu.fi} 
\author{Benny Avelin and Teemu Lukkari} 

\subjclass[2010]{35K55, 31C45}

\keywords{porous medium equation, supersolutions, comparison
  principle, lower semicontinuity, degerate diffusion}

\thanks{The research reported in this work was done during the
  authors' stay at the Institut Mittag-Leffler (Djursholm, Sweden).}

\maketitle 
\begin{abstract}
  Weak supersolutions to the porous medium equation are defined by
  means of smooth test functions under an integral sign. We show that
  nonnegative weak supersolutions become lower semicontinuous after
  redefinition on a set of measure zero. This shows that weak
  supersolutions belong to a class of supersolutions defined by a
  comparison principle.
\end{abstract}

\maketitle

\section{Introduction}

We study regularity properties of weak supersolutions to the porous medium equation 
\begin{displaymath}
	\frac{
	\partial u}{
	\partial t}-\Delta u^m=0, 
\end{displaymath}
where $m>1$. This equation has attracted a lot of attention during the
last decades, mostly because of its interesting mathematical
properties. This equation shares many properties with the so called
$p$-parabolic equation, for example, intrinsic scaling and finite
speed of propagation. However, the porous medium equation is a
different game. When studying finer properties of the equation, the
techniques often differ although the results are essentially the
same. Since $m>1$, the equation is degenerate, i.e. the modulus of
ellipticity vanishes when the solution is zero. For more information
about this type of equations, including numerous further references,
we refer to the monographs \cite{DaskalopoulosKenig} and
\cite{VazquezBook}.

Weak supersolutions to the porous medium equation are defined via a
variational inequality: they satisfy
\begin{displaymath}
	\int_{\Omega_T}-u\frac{
	\partial \varphi}{
	\partial t}+\nabla u^m\cdot\nabla \varphi\dif x\dif t\geq 0 
\end{displaymath}
for all nonnegative smooth test functions $\varphi$ with compact
support. On the other hand, in potential theory, it is natural to
consider a notion of supersolutions defined via the comparison
principle. This means that a lower semicontinuous function $u$ is a
\emph{\lscss} if it obeys the parabolic comparison principle with
respect to continuous solutions. In the classical potential theory of
the Laplacian, this definition is due to Fr{\'e}d{\'e}ric Riesz, see
\cite[pp. 333]{Riesz}. Observe that \lscss s are defined in \emph{every}
point.  For the porous medium equation, see \cite{KinnunenLindqvist2},
where the label viscosity supersolutions is used.

The natural question is now what is the exact relationship between the
two classes of supersolutions. The expectation is that weak
supersolutions should enjoy ``one sided'' regularity (lower
semicontinuity) since solutions have ``two sided'' regularity
(continuity). Not only is the lower semicontinuity of weak
supersolutions interesting in its own right, but in classical
potential theory lower semicontinuity plays a key role, connecting the
variational formulation with the potential theoretic one.

Our main result shows that nonnegative weak supersolutions indeed are,
up to a choice of a proper pointwise representative, lower
semicontinuous. That is we prove
\begin{theorem}
  \label{thm:lsc:intro} Let $u$ be a nonnegative weak supersolution to
  the porous medium equation in $\Omega\times(t_1,t_2)$. Then
	\begin{displaymath}
		u(x,t)=\essliminf_{(y,s) \to (x,t)} u(y,s) = \lim_{r \to 0} \essinf_{(x,t)+Q(r,r^2)} u 
	\end{displaymath}
	at all Lebesgue points of $u$ such that $u(x,t)<\infty$, where $Q(r,r^2) = B(0,r) \times (-r^2,r^2)$. In particular, $u$ has a lower semicontinuous representative. 
\end{theorem}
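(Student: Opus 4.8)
The plan is to write down the candidate lower semicontinuous representative explicitly and then show it agrees with $u$ almost everywhere. Put
$$v(x,t):=\lim_{r\to0}\essinf_{(x,t)+Q(r,r^2)}u,$$
the limit existing because $r\mapsto\essinf_{(x,t)+Q(r,r^2)}u$ is nonincreasing. Lower semicontinuity of $v$ is immediate from nested cylinders: if $(x_k,t_k)\to(x,t)$, then for each fixed $r$ we have $(x_k,t_k)+Q(r/2,r^2/4)\subset(x,t)+Q(r,r^2)$ for $k$ large, so $v(x_k,t_k)\ge\essinf_{(x,t)+Q(r,r^2)}u$, and letting $k\to\infty$ and then $r\to0$ gives $\liminf_k v(x_k,t_k)\ge v(x,t)$. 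The essential lower limit over Euclidean balls and the limit over the cylinders $Q(r,r^2)$ coincide because the two neighbourhood bases are mutually comparable up to dilation, so the first and the last expressions in the statement are equal for any measurable $u$. Finally, at a Lebesgue point $(x_0,t_0)$ of $u$ one has $\essinf_{(x_0,t_0)+Q(r,r^2)}u\le\frac1{|Q(r,r^2)|}\int_{(x_0,t_0)+Q(r,r^2)}u\to u(x_0,t_0)$, whence $v(x_0,t_0)\le u(x_0,t_0)$. The whole theorem therefore reduces to proving the one-sided bound
$$v(x_0,t_0)\ \ge\ u(x_0,t_0)\qquad\text{at every Lebesgue point }(x_0,t_0)\text{ with }u(x_0,t_0)<\infty;$$
call this inequality $(\star)$.

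To prove $(\star)$, fix such a point and write $c:=u(x_0,t_0)$. If $c=0$ then $(\star)$ is trivial since $u\ge0$, so assume $c>0$. Writing $Q_\rho:=(x_0,t_0)+Q(\rho,\rho^2)$, the Lebesgue point property says $\frac1{|Q_\rho|}\int_{Q_\rho}|u-c|\to0$ as $\rho\to0$, i.e. $u$ is close to the constant $c$ in $L^1$-average on small cylinders around $(x_0,t_0)$. The heart of the matter is to upgrade this to a pointwise lower bound: for every $\theta\in(0,1)$ there should be $\nu=\nu(N,m,\theta)\in(0,1)$ such that, for every level $k$ in a fixed interval around $c$ and all sufficiently small $\rho$,
$$\frac{\big|\{u<k\}\cap Q_{2\rho}\big|}{|Q_{2\rho}|}\le\nu\quad\Longrightarrow\quad u\ge(1-\theta)k\ \text{ a.e. on }Q_\rho.$$
This is the parametrized De Giorgi measure-to-pointwise lemma for nonnegative weak supersolutions of the porous medium equation; its proof is the usual Caccioppoli-estimate / parabolic-Sobolev / fast-geometric-convergence iteration, run over levels increasing toward $k$ on intrinsically scaled cylinders (which, $c$ being fixed, are just a fixed time-dilation of the $Q_\rho$), with the truncated test functions justified through Steklov averages, and with constants that stay stable because the relevant levels remain bounded away from $0$ and $\infty$. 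Equivalently, one can argue with the bounded subsolution $(c-u)_+=c-\min(u,c)$, whose $L^1$-average near $(x_0,t_0)$ is small, and invoke the local boundedness estimate for subsolutions; the degeneracy of the equation for $(c-u)_+$ on the set where $u$ is small is harmless because that set has vanishing relative measure there. Either way this lemma is the single genuinely technical step, and the only place where the nonlinearity and degeneracy of the equation have to be confronted.

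Granting the lemma, $(\star)$ follows by routine bookkeeping. Fix $\theta\in(0,1)$ and $\delta\in(0,c/2)$. Since $(x_0,t_0)$ is a Lebesgue point, Chebyshev's inequality gives
$$\frac{\big|\{u<c-\delta\}\cap Q_{2\rho}\big|}{|Q_{2\rho}|}\ \le\ \frac1{\delta}\cdot\frac1{|Q_{2\rho}|}\int_{Q_{2\rho}}|u-c|\ \xrightarrow[\rho\to0]{}\ 0,$$
so for all small $\rho$ the hypothesis of the lemma holds with $k=c-\delta$, yielding $u\ge(1-\theta)(c-\delta)$ a.e. on $Q_\rho$, i.e. $\essinf_{Q_\rho}u\ge(1-\theta)(c-\delta)$. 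Hence $v(x_0,t_0)=\lim_{\rho\to0}\essinf_{Q_\rho}u\ge(1-\theta)(c-\delta)$; letting first $\delta\to0$ and then $\theta\to0$ gives $v(x_0,t_0)\ge c=u(x_0,t_0)$, which is $(\star)$. Finiteness of $c$ is used both to keep $\int_{Q_{2\rho}}|u-c|$ finite and small and to keep the levels $c-\delta$ inside the admissible range of the lemma. This proves $u=v$ a.e., and since $v$ is lower semicontinuous and realizes the displayed limits at every Lebesgue point, $v$ is the asserted representative.

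The expected main obstacle is precisely this parametrized De Giorgi lemma with level-uniform constants for porous-medium supersolutions: one must run the De Giorgi iteration on intrinsically scaled cylinders, phrase the energy estimate in the natural variable $u^{(m+1)/2}$, and handle the truncated test functions via Steklov averaging. By contrast the construction of $v$, its lower semicontinuity, the identification of the two neighbourhood bases, and the Lebesgue-point estimates are all soft.
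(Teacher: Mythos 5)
Your overall strategy coincides with the paper's: define the candidate representative as an essential lower limit, dispose of the soft steps (lower semicontinuity of the regularization, the inequality $v\le u$ at Lebesgue points, equivalence of neighbourhood bases), and reduce everything to a De Giorgi--type pointwise lower bound near a Lebesgue point. Your bookkeeping — Chebyshev to turn the Lebesgue-point condition into a small relative measure, then the measure-to-pointwise lemma, then $\delta\to 0$, $\theta\to 0$ — is sound, and it is an acceptable repackaging of the paper's route (the paper instead states the De Giorgi output as a direct $L^\infty$ bound on $(M-u^m)_+$ controlled by a function $G$ of the $L^2+L^1$ average, which lets them skip the Chebyshev step).

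The genuine gap is in the technical kernel, and it is precisely the difficulty the paper flags. You assert that ``one can argue with the bounded subsolution $(c-u)_+=c-\min(u,c)$ ... and invoke the local boundedness estimate for subsolutions.'' This is false for $m>1$: $(c-u)_+$ is \emph{not} a weak subsolution of the porous medium equation, because the operator $u\mapsto u_t-\Delta u^m$ is not invariant under adding constants. Formally, for $u$ a supersolution, $\partial_t(c-u)-\Delta(c-u)^m\le -\Delta\bigl(u^m+(c-u)^m\bigr)$, and there is no sign. The paper states this obstruction explicitly in the introduction and designs the whole Section 3--4 machinery to get around it. Your other sketch — ``the usual Caccioppoli / parabolic-Sobolev / fast-geometric-convergence iteration,'' ``phrase the energy estimate in the natural variable $u^{(m+1)/2}$'' — is the standard route for \emph{sub}solution-type upper bounds and does not explain where the needed Caccioppoli inequality for the truncations $(k-u)_+$ (equivalently $(k-u^m)_+$) comes from. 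What actually makes the argument go through is the energy estimate of Lemma \ref{lem:energy}: testing the mollified inequality with $(L-u^m)_+\phi^2$, using the auxiliary primitive $B$ to absorb the time term, and accepting a constant $C(M)$ that genuinely depends on the level $M$ (the price for the lack of translation invariance). That is the nontrivial step, and your proposal does not supply it — one of your two sketched paths is wrong, and the other is too generic to confront the specific failure of translation invariance.

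Once that energy estimate is in hand, your measure-to-pointwise lemma (with level-dependent constants, but that is harmless since $c=u(x_0,t_0)$ is fixed and finite) does follow by the iteration you describe, and the remainder of your argument is correct. One cosmetic remark: the paper phrases everything in terms of $v=u^m$ rather than $u$; since $x\mapsto x^m$ is a continuous increasing bijection of $[0,\infty)$, Lebesgue points and essential infima are preserved, so this is only a change of variables, but it is the variable in which the Caccioppoli estimate is naturally stated.
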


Theorem \ref{thm:lsc:intro}, together with the comparison principle
between weak supersolutions and weak solutions, shows that weak
supersolutions are also {\lscss s}. In the other direction it was
proved in \cite{KinnunenLindqvist2} that locally bounded {\lscss s}
are weak supersolutions. Thus the two different notions are
coherent. One should also note that the class of {\lscss s} is
strictly larger if unbounded functions are allowed. To see this,
consider the celebrated Barenblatt solution, \cite{Barenblatt,
  ZeldovichKompaneets},
\begin{displaymath}
	\B_m(x,t)= 
	\begin{cases}
		t^{-\lambda}\left(C-\frac{\lambda(m-1)}{2mn} \frac{\abs{x}^2}{t^{2\lambda/n}}\right)_+^{1/(m-1)}, & t>0,\\
		0, & t\leq 0, 
	\end{cases}
\end{displaymath}
where $\lambda=n/(n(m-1)+2)$, and $C>0$ can be chosen freely. It is not a weak supersolution, since its gradient fails to have the required amount of integrability, i.e. $|\nabla \B_m^m|$ is not in $L^2_{loc}(E)$ for any open set $E$ containing the origin. However, the Barenblatt solution still obeys the comparison principle with respect to continuous solutions and thus it is a \lscss.

To prove that a weak supersolution has a lower semicontinuous
representative, we adapt the ideas used for a class of equations
containing the $p$-parabolic equation, see \cite{Kuusi}. The main
technical tool in \cite{Kuusi} is an $L^\infty$ estimate for weak
subsolutions. The chief difficulty in adapting the arguments is that
we may not add constants to subsolutions to the porous medium
equation, i.e. $(k-u)_+$ is in general not a subsolution when $u$ is a
weak supersolution. We deal with this by proving the necessary
$L^\infty$ estimates directly for $(k-u^m)_+$, by a version of the De
Giorgi iteration procedure.

The paper is organized as follows. In Section \ref{sec:weaksuper}, we recall the exact definition of weak supersolutions, {\lscss s} and some technical results needed for the estimates. In Section \ref{sec:energy_estimate}, we derive an energy estimate for truncations of weak supersolutions. This estimate is then used in a De Giorgi type iteration process in Section \ref{sec:boundedness} to get the $L^\infty$ estimate needed in the lower semicontinuity proof. Finally, Section \ref{sec:lower_semicontinuity} contains the proof of our main result, Theorem \ref{thm:lsc:intro}.

\section{Supersolutions} \label{sec:weaksuper}

Let $\Omega$ be an open and bounded subset of $\R^N$, and let $0<t_1<t_2<T$. We use the notation $\Omega_T=\Omega\times(0,T)$ and $U_{t_1,t_2}=U\times (t_1,t_2)$, where $U\subset\Omega$ is open. The parabolic boundary $
\partial_p U_{t_1,t_2}$ of a space-time cylinder $U_{t_1,t_2}$ consists of the initial and lateral boundaries, i.e. 
\begin{displaymath}
	\partial_p U_{t_1,t_2}=(\overline{U}\times\{t_1\}) \cup (
	\partial U\times [t_1,t_2]). 
\end{displaymath}
The notation $U_{t_1,t_2}\Subset\Omega_T$ means that the closure $\overline{U_{t_1,t_2}}$ is compact and $\overline{U_{t_1,t_2}}\subset\Omega_T$.

We use $H^1(\Omega)$ to denote the usual Sobolev space, the space of functions $u$ in $L^2(\Omega)$ such that the weak gradient exists and also belongs to $L^2(\Omega)$. The norm of $H^1(\Omega)$ is 
\begin{displaymath}
	\norm{u}_{H^1(\Omega)}=\norm{u}_{L^2(\Omega)}+\norm{\nabla u}_{L^2(\Omega)}. 
\end{displaymath}
The Sobolev space with zero boundary values, denoted by $H^{1}_0(\Omega)$, is the completion of $C^{\infty}_0(\Omega)$ with respect to the norm of $H^1(\Omega)$.

The parabolic Sobolev space $L^2(0,T;H^1(\Omega))$ consists of measurable functions $u:\Omega_T\to[-\infty,\infty]$ such that $x\mapsto u(x,t)$ belongs to $H^1(\Omega)$ for almost all $t\in(0,T)$, and 
\begin{displaymath}
	\int_{\Omega_T}\abs{u}^2+\abs{\nabla u}^2\dif x\dif t<\infty. 
\end{displaymath}
The definition of $L^2(0,T;H^{1}_0(\Omega))$ is identical, apart from the requirement that $x\mapsto u(x,t)$ belongs to $H^{1}_0(\Omega)$. We say that $u$ belongs to $L^2_{loc}(0,T;H^{1}_{loc}(\Omega))$ if $u\in L^2(t_1,t_2;H^1(U))$ for all $U_{t_1,t_2}\Subset\Omega_T$.

Supersolutions to the porous medium equation are defined in the weak sense in the parabolic Sobolev space. 
\begin{definition}
	\label{def:local-weak} A nonnegative function $u:\Omega_T\to\R$ is a \emph{weak supersolution} of the equation 
	\begin{equation}
		\label{eq:pme} \frac{
		\partial u}{
		\partial t}-\Delta u^m=0 
	\end{equation}
	in $\Omega_T$, if $u^m\in L^2_{loc}(0,T;H^{1}_{loc}(\Omega))$ and 
	\begin{equation}
		\label{eq:weak-pme} \int_{\Omega_T}-u\frac{
		\partial\varphi}{
		\partial t} +\nabla u^m\cdot\nabla\varphi\dif x\dif t\geq 0 
	\end{equation}
	for all positive, smooth test functions $\varphi$ compactly supported in $\Omega_T$. The definition of \emph{weak subsolutions} is similar; the inequality is simply reversed. \emph{Weak solutions} are defined as functions that are both super- and subsolutions. 
\end{definition}

Weak solutions are locally H\"older continuous, after a possible
redefinition on a set of measure zero. See \cite{DK},
\cite{DaskalopoulosKenig}, \cite{DiBeFried}, \cite{VazquezBook}, or
\cite{Kiinalaiset}.

Our main aim in this note is to relate the notion of weak
supersolutions to the following class of supersolutions.  
\begin{definition}
	\label{def:viscosity-supersols} A function $u:\Omega_T\to [0,\infty]$ is a \emph{\lscss }, if 
	\begin{enumerate}
		\item $u$ is lower semicontinuous, 
		\item $u$ is finite in a dense subset of $\Omega_T$, and 
		\item the following parabolic comparison principle holds: Let $U_{t_1,t_2}\Subset\Omega$, and let $h$ be a solution to \eqref{eq:pme} which is continuous in $\overline{U_{t_1,t_2}}$. Then, if $h\leq u$ on $
		\partial_p U_{t_1,t_2}$, $h\leq u$ also in $U_{t_1,t_2}$. 
	\end{enumerate}
\end{definition}
Note that a \lscss is defined in every point.

For the lower semicontinuity, we need to derive estimates for weak supersolutions. One of the difficulties in this is that the definition of weak supersolutions does not include a time derivative. However, we would still like to use test functions depending on the supersolution itself, and the time derivative $u_t$ inevitably appears. The forward in time mollification 
\begin{equation}
	\label{eq:naumann-conv} u^\sigma(x,t)=\frac{1}{\sigma}\int_0^t e^{(s-t)/\sigma}u(x,s)\dif s 
\end{equation}
is convenient in dealing with this defect. The aim is to establish estimates independent of the time derivative of $u^\sigma$, and then pass to the limit $\sigma\to 0$.

The basic properties of the mollification \eqref{eq:naumann-conv} are given in the following lemma, see \cite{Naumann}. 
\begin{lemma}
	\label{lem:conv-prop} 
	\begin{enumerate}
		\item If $u\in L^p(\Omega_T)$, then 
		\begin{displaymath}
			\norm{u^\sigma}_{L^p(\Omega_T)}\leq\norm{u}_{L^p(\Omega_T)}, 
		\end{displaymath}
		\begin{equation}
			\label{eq:naumann-timederiv} \frac{
			\partial u^\sigma}{
			\partial t}=\frac{u-u^\sigma}{\sigma}, 
		\end{equation}
		and $u^\sigma\to u$ in $L^p(\Omega_T)$ as $\sigma\to 0$. 
		\item If $\nabla u\in L^p(\Omega_T)$, then $\nabla(u^\sigma)=(\nabla u)^\sigma$, 
		\begin{displaymath}
			\norm{\nabla u^\sigma}_{L^p(\Omega_T)}\leq\norm{\nabla u}_{L^p(\Omega_T)}, 
		\end{displaymath}
		and $\nabla u^\sigma\to \nabla u$ in $L^p(\Omega_T)$ as $\sigma \to 0$. 
		\item If $\varphi\in C(\overline{\Omega_T})$, then 
		\begin{displaymath}
			\varphi^\sigma(x,t)+e^{-t/\sigma}\varphi(x,0)\to \varphi(x,t) 
		\end{displaymath}
		uniformly in $\Omega_T$ as $\sigma \to 0$. 
	\end{enumerate}
\end{lemma}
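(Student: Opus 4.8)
The plan is to dispatch the three items in turn, reading everything off the representation obtained from $u^\sigma$ by the change of variables $s=t-\sigma r$,
\begin{equation*}
  u^\sigma(x,t)=\int_0^{t/\sigma}e^{-r}\,u(x,t-\sigma r)\,dr ,
\end{equation*}
together with the elementary identities $\int_0^\infty e^{-r}\,dr=1$ and $\frac1\sigma\int_0^t e^{(s-t)/\sigma}\,ds=1-e^{-t/\sigma}\in(0,1]$. Thus $u^\sigma(x,t)$ is an average of the values $u(x,s)$, $0<s<t$, against a kernel of total mass slightly below one.

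For item (1), the $L^p$ bound comes from Jensen's inequality applied to the subprobability measure $d\mu_t(s)=\frac1\sigma e^{(s-t)/\sigma}\,ds$ on $(0,t)$: since $p\ge1$ and $\mu_t$ has mass at most one, $\abs{u^\sigma(x,t)}^p\le\frac1\sigma\int_0^t e^{(s-t)/\sigma}\abs{u(x,s)}^p\,ds$, and integrating this in $x$ and $t$ and interchanging the order of integration by Tonelli gives the estimate, because $\frac1\sigma\int_s^T e^{(s-t)/\sigma}\,dt=1-e^{(s-T)/\sigma}\le1$. The identity \eqref{eq:naumann-timederiv} is the fundamental theorem of calculus applied to $u^\sigma(x,t)=\frac1\sigma e^{-t/\sigma}\int_0^t e^{s/\sigma}u(x,s)\,ds$: differentiating the product in $t$ produces $-\frac1\sigma u^\sigma+\frac1\sigma u$, which is valid for almost every $t$, hence also in the weak sense in $t$. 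For the convergence $u^\sigma\to u$ in $L^p(\Omega_T)$, I would first treat $\varphi\in C(\overline{\Omega_T})$: for each $(x,t)$ with $t>0$ one has $\varphi^\sigma(x,t)\to\varphi(x,t)$ by dominated convergence in the $r$-variable, while $\abs{\varphi^\sigma}\le\norm{\varphi}_{L^\infty(\Omega_T)}$ uniformly, so dominated convergence in $(x,t)$ yields $\varphi^\sigma\to\varphi$ in $L^p(\Omega_T)$; the general case then follows from the density of $C(\overline{\Omega_T})$ in $L^p(\Omega_T)$ and the uniform bound already proved, by a three-$\varepsilon$ argument.

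Item (2) rests on the fact that the mollification acts in the time variable only and therefore commutes with weak spatial derivatives: for $\psi\in C_0^\infty(\Omega)$, testing $u^\sigma(\cdot,t)$ against $\partial_{x_i}\psi$ and moving the derivative onto $u(\cdot,s)$ under the $s$-integral identifies $\partial_{x_i}(u^\sigma)$ with $(\partial_{x_i}u)^\sigma$ in the weak sense; the norm inequality and the strong convergence then follow by applying item (1) to each component of $\nabla u$. Item (3) is the continuous case of item (1) with the initial contribution retained: starting from
\begin{equation*}
  \varphi^\sigma(x,t)+e^{-t/\sigma}\varphi(x,0)-\varphi(x,t)=\int_0^{t/\sigma}e^{-r}\bigl(\varphi(x,t-\sigma r)-\varphi(x,t)\bigr)\,dr+e^{-t/\sigma}\bigl(\varphi(x,0)-\varphi(x,t)\bigr),
\end{equation*}
one bounds the integral by splitting at $r=\delta/\sigma$, using uniform continuity of $\varphi$ on $\overline{\Omega_T}$ for $r\le\delta/\sigma$ and $\int_{\delta/\sigma}^\infty e^{-r}\,dr=e^{-\delta/\sigma}$ for the tail, and handles the last term by distinguishing $t<\delta$, where $\abs{\varphi(x,0)-\varphi(x,t)}$ is small by uniform continuity, from $t\ge\delta$, where $e^{-t/\sigma}\le e^{-\delta/\sigma}$ is small; in both regimes the right-hand side is uniformly small.

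I do not expect a genuine obstacle. The only places that ask for care are the bookkeeping at the initial time $t=0$ — which is precisely why the corrective term $e^{-t/\sigma}\varphi(x,0)$ is needed in item (3), and why the kernel in $u^\sigma$ has mass $1-e^{-t/\sigma}$ rather than $1$ — and the justification of interchanging spatial differentiation with the $s$-integral in item (2), which is routine once phrased against test functions. Further details can be found in \cite{Naumann}.
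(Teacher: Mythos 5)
Your proof is correct. The paper itself does not prove this lemma at all --- it simply cites Naumann's monograph --- so there is nothing to compare against beyond noting that your argument is the standard one: the change of variables $s=t-\sigma r$ exhibiting $u^\sigma$ as an average against a subprobability kernel, Jensen/Tonelli for the $L^p$ contraction, the product rule for \eqref{eq:naumann-timederiv}, a density (three-$\varepsilon$) argument for the $L^p$ convergence, commutation with spatial weak derivatives via test functions for item (2), and the splitting at $r=\delta/\sigma$ together with uniform continuity for item (3). The only (implicit) caveats worth recording are that the convergence statements require $p<\infty$ and that item (2) tacitly uses $u\in L^p(\Omega_T)$ to justify Fubini; both are harmless in the context in which the lemma is applied.
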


We need the equation satisfied by the mollification $u^\sigma$ of a weak supersolution, given by 
\begin{equation}
	\label{eq:reg-pme} \int_{\Omega_T}\varphi\frac{
	\partial u^\sigma}{
	\partial t} +\nabla (u^m)^\sigma\cdot\nabla\varphi \dif x\dif t\geq \int_{\Omega}u(x,0)\left(\frac{1}{\sigma}\int_{0}^T\varphi e^{-s/\sigma} \dif s\right)\dif x. 
\end{equation}
This equation is required to hold for all test functions $\varphi\in L^2(0,T; H^1_0(\Omega))$. This follows by straightforward manipulations involving a change of variables and Fubini's theorem.

\section{An energy estimate}

\label{sec:energy_estimate}

In this section, we derive an energy estimate for truncations of weak supersolutions. Specifically we obtain an energy estimate for level sets of $(M-u^m)_+$ of subsolution type, but since the equation does not allow addition of constants, our constants in the energy estimate depends on $M$. We use the auxiliary function in the following lemma to eliminate the time derivative when deriving the energy estimate. 
\begin{lemma}
	Let $v \geq 0$ and $m > 1$, define
	\begin{displaymath}
		B(v)=\frac{1}{m}\int_0^v(L-s)^{1/m-1}s\dif s. 
	\end{displaymath}
	Then for any nonnegative differentiable function $f(t)$ we have
	\begin{equation}
		\label{eq:B} \frac{\partial f}{\partial t}(L-f^m)_+=-\frac{\partial}{\partial t}B[(L-f^m)_+]. 
	\end{equation}
	Further, we have for any nonnegative number $v$
	\begin{equation}
		\label{eq:B2} 
		\begin{aligned}
			B[(L-v^m)_+]\leq &(L-v^m)_+(L^{1/m}-v)_+,\text{ and}\\
			B[(L-v^m)_+]\geq &L^{1/m-1}\frac{(L-v^m)_+^2}{2}. 
		\end{aligned}
	\end{equation}
\end{lemma}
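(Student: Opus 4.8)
The plan is to prove the three assertions by direct computation, exploiting the chain rule and elementary estimates on the integrand of $B$.

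First I would establish \eqref{eq:B}. Fix a nonnegative differentiable $f(t)$ and set $w(t) = (L - f(t)^m)_+$. On the open set where $f^m < L$ (so $w = L - f^m > 0$), the function $w$ is differentiable with $w' = -m f^{m-1} f'$. By the chain rule, $\frac{\partial}{\partial t} B[w] = B'(w) w'$, and from the definition $B'(v) = \frac{1}{m}(L - v)^{1/m - 1} v$, so $B'(w) = \frac{1}{m}(L - w)^{1/m-1} w = \frac{1}{m} (f^m)^{1/m - 1} w = \frac{1}{m} f^{1-m} w$. Multiplying, $\frac{\partial}{\partial t} B[w] = \frac{1}{m} f^{1-m} w \cdot (-m f^{m-1} f') = - f' w = -\frac{\partial f}{\partial t}(L - f^m)_+$, which is \eqref{eq:B}. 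On the open set where $f^m > L$, both sides vanish ($w \equiv 0$ so $B[w] \equiv B(0) = 0$, and the truncation on the left is zero). The only delicate point is the boundary set $\{t : f(t)^m = L\}$; there one argues that both sides are zero almost everywhere on it (a.e. point of a level set of the differentiable function $f^m$ is either in the interior of the level set, where the derivative vanishes, or has $\frac{d}{dt}f^m = 0$ by the standard fact that $g' = 0$ a.e. on $\{g = \text{const}\}$), so the identity holds in the a.e. sense, which is what is needed when this is integrated against test functions. I would state \eqref{eq:B} with this understanding.

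Next I would prove the two bounds in \eqref{eq:B2}. Fix $v \geq 0$. If $v^m \geq L$ both sides are zero (for the lower bound, $L^{1/m-1}(L-v^m)_+^2/2 = 0$; note we may assume $L > 0$, as $L = 0$ is trivial), so assume $v^m < L$, i.e. $v < L^{1/m}$, and then $(L - v^m)_+ = L - v^m$ and $(L^{1/m} - v)_+ = L^{1/m} - v$. We have
\begin{displaymath}
  B[(L - v^m)_+] = \frac{1}{m}\int_0^{L - v^m}(L - s)^{1/m - 1} s \dif s.
\end{displaymath}
For the upper bound, on the interval $s \in (0, L - v^m)$ we have $L - s > v^m$, so $(L-s)^{1/m - 1} \leq (v^m)^{1/m - 1} = v^{1-m}$ (using $1/m - 1 < 0$), hence $B[(L-v^m)_+] \leq \frac{1}{m} v^{1-m} \int_0^{L-v^m} s \dif s = \frac{1}{m} v^{1-m} \frac{(L - v^m)^2}{2}$; one then checks $\frac{1}{2m} v^{1-m}(L - v^m) \leq L^{1/m} - v$, which after writing $L - v^m = (L^{1/m} - v)(L^{(m-1)/m} + \cdots + v^{m-1})$ and bounding the second factor by $m L^{(m-1)/m}$... actually it is cleaner to bound directly: $(L - s)^{1/m-1} \le (L-s)^{1/m-1}$ and integrate $\int_0^{L-v^m}(L-s)^{1/m-1}\dif s = m[L^{1/m} - (v^m)^{1/m}] = m(L^{1/m} - v)$, giving $B[(L-v^m)_+] \le \frac{1}{m}(L - v^m)\cdot m (L^{1/m} - v) = (L-v^m)(L^{1/m}-v)$ after pulling the factor $s \le L - v^m$ out of the integral. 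For the lower bound, on $s \in (0, L - v^m)$ we have $L - s < L$, so $(L - s)^{1/m - 1} \geq L^{1/m - 1}$, whence $B[(L - v^m)_+] \geq \frac{1}{m} L^{1/m - 1} \int_0^{L - v^m} s \dif s = \frac{L^{1/m-1}}{2m}(L - v^m)^2$; since $m > 1$ this is $\geq L^{1/m-1}(L-v^m)^2/2$ only if... no: $\frac{1}{2m} < \frac{1}{2}$, so this is the wrong direction. Instead I bound $s$ below is not available; rather I keep $s$ and note $\int_0^{L-v^m} L^{1/m-1} s\, \dif s$ is already too small. The correct route for the lower bound is to pull out $s$: since the claimed bound has $(L-v^m)^2$ not $(L-v^m)^2/m$, I instead use $(L - s)^{1/m-1} \cdot s$ and compare with $\frac{1}{m}\cdot$(something); the honest computation is $B[(L-v^m)_+] = \frac1m\int_0^{L-v^m}(L-s)^{1/m-1}s\,\dif s \ge \frac1m \int_0^{L-v^m} L^{1/m-1} s \,\dif s = \frac{L^{1/m-1}(L-v^m)^2}{2m}$, and since we in fact want a bound with $\frac12$ rather than $\frac{1}{2m}$ I suspect the intended inequality absorbs the $m$ elsewhere or uses $v^m \le L$ so $(L-s) \ge $ a better bound — I would recheck the constant against how \eqref{eq:B2} is invoked and adjust.

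The main obstacle is thus not conceptual but bookkeeping: getting the constants in \eqref{eq:B2} exactly right (in particular tracking whether the factor $1/m$ belongs there), and handling the a.e.-differentiability subtlety at the free boundary $\{f^m = L\}$ in \eqref{eq:B} so that the identity is valid in the sense in which it will actually be used (namely, multiplied by a cutoff and integrated in the energy estimate of Section \ref{sec:energy_estimate}). Everything else is the chain rule and monotonicity of $s \mapsto (L - s)^{1/m - 1}$.
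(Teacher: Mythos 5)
Your proof takes essentially the same route as the paper: for \eqref{eq:B}, the paper simply differentiates $B[(L-g)_+]$ with $g=f^m$ via the chain rule and does not comment on the level set $\{g=L\}$, so your a.e.\ discussion is a careful gloss on the same computation; for the upper bound in \eqref{eq:B2} the paper, like you after your detour, pulls the factor $s\le(L-v^m)_+$ out of the integral and integrates the remaining $\frac1m(L-s)^{1/m-1}$ to get $(L^{1/m}-v)_+$. Your suspicion about the lower bound is justified: from the definition $B(v)=\frac1m\int_0^v(L-s)^{1/m-1}s\,\dif s$ and $(L-s)^{1/m-1}\ge L^{1/m-1}$ one gets $B[(L-v^m)_+]\ge\frac{L^{1/m-1}}{2m}(L-v^m)_+^2$, and the paper's own displayed derivation silently drops the $\frac1m$, so the stated constant in \eqref{eq:B2} is off by a factor of $m$. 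This is a harmless slip — the lower bound is used in the energy estimate of Lemma \ref{lem:energy} only up to a multiplicative constant $c=c(N,m)$, so the $\frac1m$ is absorbed — but you should record it as $\frac{L^{1/m-1}}{2m}(L-v^m)_+^2$. With that correction, your argument is complete and matches the paper's.
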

\begin{proof}
  Denote $g=f^m$. We have
  \begin{align*}
    \frac{
      \partial B[(L-g)_+]}{\partial t}=& \frac{1}{m}\frac{\partial (L-g)_+ }{		\partial t} (L-(L-g)_+)^{1/m-1}(L-g)_+\\
    =& -\frac{1}{m}\frac{\partial g }{\partial t}g^{1/m-1}(L-f)_+\\
    =& -\frac{\partial g^{1/m} }{\partial t}(L-g)_+,
  \end{align*}
  which gives \eqref{eq:B}.
	
	For the first inequality in \eqref{eq:B2}, we have by an elementary estimate and computing the integral that 
	\begin{align*}
		B[(L-v^m)_+]\leq & (L-v^m)_+\frac{1}{m}\int_{0}^{(L-v^m)_+}(L-s)^{1/m-1}\dif s\\
		=&(L-v^m)_+(L^{1/m}-v)_+. 
	\end{align*}
	For the second, we use the fact that $s\mapsto (L-s)^{1/m-1}$ is increasing since $m>1$ to get 
	\begin{displaymath}
		B[(L-v^m)_+]\geq L^{1/m-1}\int_{0}^{(L-v^m)_+}s\dif s=L^{1/m-1}\frac{(L-v^m)_+^2}{2} \qedhere 
	\end{displaymath}
\end{proof}
\begin{lemma}
	\label{lem:energy} Let $u$ be a nonnegative weak supersolution for $m > 1$ in $\Omega_{t_1,t_2}$. Then the following truncated energy estimate holds for the function $v(x,t) = (M - u^m(x,t))_+$ and any $\phi \in C_0^\infty(\Omega_{t_1,t_2})$, $\phi \geq 0$, $M \geq 0$ and $k \geq 0$. 
	\begin{align}
		\label{eqenergy} \tint &|\nabla (v - k)_+|^2 \phi^2 \dxdt + \tsup ((v - k)_+)^2 \phi^2 \dx \notag \\
		&\leq C(M) \tint (v - k)_+^2 |\nabla \phi|^2 \dxdt + C \tint \phi (\phi_t)_+ ((v - k)_+) \dxdt 
	\end{align}
	where $C(M) = C (1+\max\{M^{1/m},M^{1-1/m}\})$. 
\end{lemma}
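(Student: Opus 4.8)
The plan is the following. We may assume $M\geq k$, since otherwise $(v-k)_+\equiv 0$; writing $L:=M-k\geq 0$ we have $(v-k)_+=\big((M-u^m)_+-k\big)_+=(L-u^m)_+$, so the inequality involves only a single level $L$ of $u^m$. Because $u$ carries no time derivative, $(v-k)_+\phi^2$ cannot be inserted into \eqref{eq:weak-pme} directly, and I would instead test the mollified equation \eqref{eq:reg-pme} with $\varphi=\phi^2\,(L-(u^m)^\sigma)_+\,\xi(t)$, where $\xi\geq 0$ is a Lipschitz cut-off supported in $(t_1,t_2)$ which will later be let converge to $\mathbf{1}_{(t_1,\tau)}$ in order to produce the essential supremum over $\tau\in(t_1,t_2)$. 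Since $u(\cdot,0)\geq 0$, the right-hand side of \eqref{eq:reg-pme} is nonnegative and may be discarded, leaving a clean inequality between the diffusion and the time contributions.

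For the diffusion term, building the truncation from $(u^m)^\sigma$ rather than $(u^\sigma)^m$ keeps every spatial gradient inside $\grad u^m\in L^2_{loc}$ — all that the definition of a weak supersolution provides, $\grad u$ itself being possibly non-square-integrable — and makes the principal part a genuine square, because $\grad(u^m)^\sigma=-\grad(L-(u^m)^\sigma)_+$ on $\{(u^m)^\sigma<L\}$:
\[
\grad(u^m)^\sigma\cdot\grad\big[\phi^2(L-(u^m)^\sigma)_+\big]=-\phi^2\,\abs{\grad(L-(u^m)^\sigma)_+}^2+2\phi\,(L-(u^m)^\sigma)_+\,\grad\phi\cdot\grad(u^m)^\sigma .
\]
In the cross term I would rewrite $\grad(u^m)^\sigma$ the same way and apply Young's inequality, absorbing half of $\int\phi^2\abs{\grad(L-(u^m)^\sigma)_+}^2$ onto the left and leaving a term bounded by $C\int(L-(u^m)^\sigma)_+^2\abs{\grad\phi}^2$.

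The time term is where the auxiliary function $B$ is used. By \eqref{eq:naumann-timederiv}, $\partial_t u^\sigma=(u-u^\sigma)/\sigma$, and identity \eqref{eq:B} applied with the differentiable function $f=u^\sigma$ converts $\partial_t u^\sigma\,(L-(u^\sigma)^m)_+$ into $-\partial_t B[(L-(u^\sigma)^m)_+]$. Integrating by parts in time — using that $\phi$ vanishes near $t_1$ — and letting $\xi\to\mathbf{1}_{(t_1,\tau)}$, the time term yields a boundary contribution $\int_\Omega B[(L-(u^\sigma)^m)_+](\tau)\,\phi^2(\tau)\dx$ with the favourable sign, plus an interior term $\int B[(L-(u^\sigma)^m)_+]\,\phi(\phi_t)_+\dxdt$. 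The two inequalities in \eqref{eq:B2} then finish the estimate: the lower bound $B\geq\tfrac12 L^{1/m-1}(L-(u^\sigma)^m)_+^2$ turns the boundary contribution into $\tfrac12 L^{1/m-1}\int_\Omega (L-(u^\sigma)^m)_+^2(\tau)\,\phi^2(\tau)\dx$, so that after multiplying through by $L^{1-1/m}\leq M^{1-1/m}$ and taking the essential supremum over $\tau$ one recovers $\tsup((v-k)_+)^2\phi^2\dx$; the upper bound $B\leq (L-(u^\sigma)^m)_+(L^{1/m}-u^\sigma)_+\leq L^{1/m}(L-(u^\sigma)^m)_+$ controls the interior term by a multiple of $\int\phi(\phi_t)_+(v-k)_+\dxdt$ in the limit. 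Gathering the powers of $M$ produced in these two steps gives a constant of the form $C(1+\max\{M^{1/m},M^{1-1/m}\})$.

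Finally I would send $\sigma\to 0$ via Lemma \ref{lem:conv-prop}: $u^\sigma\to u$ and $(u^m)^\sigma\to u^m$ in $L^p_{loc}$ and a.e., so $(L-(u^m)^\sigma)_+\to(v-k)_+$ and $B[(L-(u^\sigma)^m)_+]\to B[(v-k)_+]$ boundedly, while $\grad(u^m)^\sigma\to\grad u^m$ in $L^2_{loc}$; the square term on the left survives by weak lower semicontinuity of the $L^2$ norm and the essential supremum by Fatou's lemma. The step I expect to be the main obstacle is reconciling the two preceding blocks: the diffusion term wants the truncation built from $(u^m)^\sigma$, whereas \eqref{eq:B} for the time term is tailored to $(L-f^m)_+$ with $f=u^\sigma$, and $(u^\sigma)^m\neq(u^m)^\sigma$. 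Convexity of $t\mapsto t^m$ gives $(u^\sigma)^m\leq(u^m)^\sigma$, so the two truncations differ by a nonnegative quantity that tends to $0$ as $\sigma\to 0$; the delicate point is to verify that the discrepancy this introduces in the time term is absorbed by the terms already present — exploiting the monotonicity of the integrands behind $B$, in the spirit of \eqref{eq:B2} — and it is precisely this reconciliation, rather than the Young inequality or the $\tau$-localization, that is responsible for the $M$-dependence of $C(M)$.
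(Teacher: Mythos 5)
Your proposal correctly identifies the overall architecture (test the mollified equation \eqref{eq:reg-pme}, use the auxiliary function $B$ and the bounds \eqref{eq:B2}, Young's inequality, pick $\tau_2$ near the essential supremum, send $\sigma\to 0$), and your observation that one may reduce to $(v-k)_+=(L-u^m)_+$ with $L=M-k$ is exactly what the paper does. But your choice of test function $\varphi=\phi^2(L-(u^m)^\sigma)_+\xi$ diverges from the paper's in a way that creates a genuine gap, and the point you flag as ``the main obstacle'' is in fact where the argument breaks.

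With your test function, the time term is $\int \partial_t u^\sigma\,(L-(u^m)^\sigma)_+\,\phi^2$, whereas identity \eqref{eq:B} needs the truncation in the form $(L-(u^\sigma)^m)_+$. The discrepancy is
\begin{equation*}
\int \frac{u-u^\sigma}{\sigma}\big[(L-(u^m)^\sigma)_+-(L-(u^\sigma)^m)_+\big]\phi^2\,dx\,dt.
\end{equation*}
By Jensen the bracket is $\leq 0$, but the prefactor $(u-u^\sigma)/\sigma$ has no sign and is of size $O(1/\sigma)$; the product of a quantity blowing up like $1/\sigma$ with one tending to zero has no reason to vanish or even to stay bounded, and there is no way to discard it by sign. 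Your suggestion that this mismatch is ``absorbed by the terms already present, exploiting the monotonicity of the integrands behind $B$'' and that it ``is responsible for the $M$-dependence of $C(M)$'' is a misdiagnosis on both counts. The $M$-dependence actually enters solely through the two bounds in \eqref{eq:B2}, which cost factors $L^{1/m}$ and $L^{1-1/m}$.

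The paper resolves this by \emph{not} mollifying inside the truncation: it tests with $\varphi=(L-u^m)_+\phi^2$ (legitimate because $u^m\in L^2_{loc}(0,T;H^1_{loc})$, so $\phi^2(L-u^m)_+$ lies in $L^2(0,T;H^1_0)$) and then adds and subtracts $(L-(u^\sigma)^m)_+$ in the time term. The resulting correction is
\begin{equation*}
\frac{u-u^\sigma}{\sigma}\big[(L-u^m)_+-(L-(u^\sigma)^m)_+\big],
\end{equation*}
which \emph{is} of one sign: since $t\mapsto(L-t^m)_+$ is decreasing, the bracket and the prefactor always have opposite signs, so the product is $\leq 0$ and can simply be discarded from the ``$\geq 0$'' side of the mollified inequality. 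After that, \eqref{eq:B} applies cleanly. For the gradient term both choices are workable (your choice makes the principal part an exact square for each $\sigma$, the paper's only in the limit $\sigma\to 0$, but $\grad(u^m)^\sigma\to\grad u^m$ in $L^2_{loc}$ so this causes no trouble), which is why the paper opts for the version whose time term is sign-manageable. To repair your proposal you should replace $(L-(u^m)^\sigma)_+$ by $(L-u^m)_+$ in the test function and carry out the add-and-subtract against $(L-(u^\sigma)^m)_+$; as written, the discrepancy term is uncontrolled.
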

\begin{proof}
	From the nonnegativity of $u$ and \eqref{eq:reg-pme}, the mollification satisfies 
	\begin{equation} \label{eq:energy1}
		\tauint\frac{
		\partial u^\sigma}{
		\partial t}\varphi+\nabla(u^m)^\sigma\cdot\nabla \varphi\dif x\dif t\geq 0 
	\end{equation}
	for all positive test functions with compact support in space with $t_1 < \tau_1 < \tau_2 < t_2$. We take $\varphi=(L-u^m)_+\phi^2$ in this inequality, where $L\geq 0$ will be chosen later. In the time term, we write 
	\begin{align*}
		\frac{
		\partial u^\sigma}{
		\partial t}(L-u^m)_+\phi^2=& \frac{
		\partial u^\sigma}{
		\partial t}(L-(u^\sigma)^m)_+\phi^2\\
		& +\frac{u-u^\sigma}{\sigma}[(L-u^m)_+-(L-(u^\sigma)^m)_+]. 
	\end{align*}
	Since $t\mapsto (L-t^m)_+$ is decreasing, the second term is negative, and we may discard it. We continue by using \eqref{eq:B} and integration by parts to get 
	\begin{align} \label{eq:energy2}
		\tauint\frac{
		\partial u^\sigma}{
		\partial t}(L-u^m)_+\phi^2\dif x\dif t\leq& -\left.\int_{\Omega}B[(L-(u^\sigma)^m)_+]\phi^2\dif x\right|_{\tau_1}^{\tau_2}\notag\\
		&+\tauint B[(L-(u^\sigma)^m)_+]\phi (\phi_t)_+\dif x\dif t. 
	\end{align}
	
	We plug $\varphi = (L-u^m)_+ \phi^2$ into \eqref{eq:energy1}
        and 
        use the estimate \eqref{eq:energy2}, and rearrange terms. Then let
        $\sigma\to 0$, take absolute values and use Young's inequality
        to get the estimate
	\begin{align}
		\tauint &\abs{\nabla(L-u^m)_+ }^2\phi^2\dif x\dif t +\int_{\Omega}B[(L-u^m)_+]\phi^2\dif x \bigg \rvert_{\tau_1}^{\tau_2} \notag\\
		\leq & \tauint\abs{\nabla(L-u^m)_+}(L-u^m)_+\phi\abs{\nabla\phi}\dif x\dif t + \tauint B[(L-u^m)_+](\phi_t)_+\phi\dif x\dif t \notag\\
		\leq & \frac{1}{2}\tauint\abs{\nabla(L-u^m)_+}^2\phi^2\dif x\dif t+c\tauint(L-u^m)_+^2\abs{\nabla\phi}^2\dif \dif t\notag\\
		& + \tauint B[(L-u^m)_+](\phi_t)_+ \phi\dif x\dif t. \label{eq:energy3}
	\end{align}
	To continue, reabsorbing the first term on the right hand side, using \eqref{eq:B2} to estimate the third term on the right, letting $\tau_1 \to t_1$, and get since \eqref{eq:energy3} holds for all $t_1 < \tau_2 < t_2$,
	\begin{align}
		\tint\abs{\nabla(L-u^m)_+ }^2\phi^2\dif x\dif t\leq c\tint&(L-u^m)_+^2\abs{\nabla\phi}^2\dif \dif t\notag\\
		+cL^{1/m}\tint&(L-u^m)_+(\phi_t)_+ \phi \dif x\dif t.  \label{eq:energy4}
	\end{align}
	Choose $\tau_2$ such that 
	\begin{displaymath}
		\int_{\Omega}(L-u^m)_+^2\phi^2 (x,\tau_2)\dif x\geq \frac{1}{2} \tsup(L-u^m)_+^2\phi^2(x,t)\dif x. 
	\end{displaymath}
	By an application of \eqref{eq:B2}, plugging the result into \eqref{eq:energy3}, and using \eqref{eq:energy4} we get
	\begin{align*}
		\tint&\abs{\nabla (L-u^m)_+}^2\phi^2\dif x\dif t +\tsup(L-u^m)_+^2\phi^2\dif x\\
		\leq & c(1+L^{1/m}+L^{1-1/m})\tint(L-u^m)_+^2\abs{\nabla\phi}^2 +(L-u^m)_+(\phi_t)_+\phi\dif x\dif t. 
	\end{align*}
	To finish the proof, take $L=M-k$ and note that 
	\begin{displaymath}
		(M-k-u^m)_+=((M-u^m)_+-k)_+. 
	\end{displaymath}
	When $k\leq M$, we have 
	\begin{displaymath}
		(M-k)^{1/m}\leq M^{1/m} \quad\text{and}\quad (M-k)^{1-1/m}\leq M^{1-1/m}, 
	\end{displaymath}
	and the claim follows for such $k$. If $k>M$, the claim holds trivially since $u$ is nonnegative, as then $((M-u^m)_+-k)_+=0$. 
\end{proof}

\section{Local boundedness} \label{sec:boundedness} The next step is
proving an $L^\infty$ estimate by iterating the energy estimate in a
suitable way. We adapt the De Giorgi type iteration given on pp. 35-37
in \cite{DaskalopoulosKenig}, attributed in \cite{DaskalopoulosKenig}
to a personal communication of Bouillet, Caffarelli, and Fabes.

We fix a point $(x_0,t_0)\in \Omega\times (t_1,t_2)$ and use the notation 
\begin{displaymath}
	Q^-_{r,r^2}=B(0,r)\times (-r^2,0) 
\end{displaymath}
and 
\begin{displaymath}
	Q_{r,r^2}=B(0,r)\times (-r^2,r^2) 
\end{displaymath}

\begin{remark}
	The reason for the introduction of the function $G$ in Lemma \ref{lemsup} comes from the fact that we have an $L^2$ and an $L^1$ term on the right side in the energy estimate, Lemma \ref{lem:energy}, which causes problems in the De Giorgi iteration. This is evident when combining the $L^1$ estimate \eqref{vn1+} and the $L^2$ estimate \eqref{vn2} into the iteration inequality \eqref{eq:vnit}.
\end{remark}

\begin{lemma}
	\label{lemsup} Let $u,m,M$ be as in Lemma \ref{lem:energy}, and let $\sigma \in (0,1)$ be given. Denote 
	\begin{displaymath}
		G(\delta) = \max \{ \delta^{\frac{1}{N+4}}, \delta^{\frac{1}{4}} \}. 
	\end{displaymath}
	Then there exists a constant $C > 0$ such that 
	\begin{align*}
		\esssup_{(x_0,t_0)+Q^-_{\sigma \rho, \sigma \rho^2}} &(M-u^m)_+ \\
		&\leq C \bigg [\frac{C_1(M)}{(1-\sigma)^2} \bigg ]^{\frac{N+2}{4}} G \left ( \fint_{(x_0,t_0)+Q^-_{\rho,\rho^2}} (M-u^m)_+^2 + (M-u^m)_+ \dxdt \right ) 
	\end{align*}
	whenever $\rho>0$ is small enough, so that $(x_0,t_0)+Q^-_{\rho,\rho^2}\Subset \Omega_{t_1,t_2}$ and where $C_1(M) = \Big (1+\big (M^{2}+M \big )^{\frac{1}{N+2}} \Big ) \Big (1+\max\{M^{1/m},M^{1-1/m}\}\Big )$. 
\end{lemma}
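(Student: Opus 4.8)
The plan is to carry out a De~Giorgi iteration for $v=(M-u^m)_+$, treated as a subsolution-type quantity through the energy estimate of Lemma~\ref{lem:energy}, in the spirit of the scheme on pp.~35--37 of \cite{DaskalopoulosKenig}. Fix a parameter $k>0$, to be chosen at the very end, and set
\[
	\rho_n=\sigma\rho+(1-\sigma)2^{-n}\rho,\qquad Q_n=(x_0,t_0)+Q^-_{\rho_n,\rho_n^2},\qquad k_n=(1-2^{-n})k,
\]
so that $Q_0=(x_0,t_0)+Q^-_{\rho,\rho^2}$, the cylinders $Q_n$ decrease to $(x_0,t_0)+Q^-_{\sigma\rho,\sigma\rho^2}$, $k_0=0$ and $k_n\uparrow k$. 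Choose cutoffs $\phi_n\in C_0^\infty(Q_n)$ with $0\le\phi_n\le1$, $\phi_n\equiv1$ on $Q_{n+1}$, $\abs{\nabla\phi_n}\le C2^n/((1-\sigma)\rho)$ and $0\le(\phi_n)_t\le C4^n/((1-\sigma)\rho^2)$, and write $w_n=(v-k_n)_+\phi_n$ and $A_n=\{v>k_n\}\cap Q_n$; all the $Q_n$ sit inside $\Omega_{t_1,t_2}$ by hypothesis and have measure comparable to $\rho^{N+2}$.

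First I would insert level $k_n$ and cutoff $\phi_n$ into Lemma~\ref{lem:energy}. Using the bounds on $\nabla\phi_n$ and $(\phi_n)_t$, its left-hand side controls the parabolic energy norm $\esssup_{t}\int w_n^2\dx+\iint_{Q_n}\abs{\nabla w_n}^2\dxdt$, while its right-hand side is at most $C\,C(M)\,4^n((1-\sigma)\rho)^{-2}\iint_{Q_n}\big((v-k_n)_+^2+(v-k_n)_+\big)\dxdt$, with $C(M)$ as in Lemma~\ref{lem:energy}. The parabolic Sobolev inequality then bounds $\iint_{Q_n}w_n^{2(N+2)/N}\dxdt$ by the $\tfrac{N+2}{N}$-th power of the energy norm, hence by that power of the preceding expression. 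The gain at the next level comes from the super-level set: on $A_{n+1}$ one has $(v-k_n)_+\ge k_{n+1}-k_n=k2^{-(n+1)}$, whence $\chi_{A_{n+1}}\le(k2^{-(n+1)})^{-q}(v-k_n)_+^q$ for every $q>0$, and since $u\ge0$ forces $0\le v\le M$ this controls $\abs{A_{n+1}}$ by negative powers of $k$ and nonnegative powers of $M$ times $\iint_{Q_n}(v-k_n)_+$ or $\iint_{Q_n}(v-k_n)_+^2$. Combining this with Hölder's inequality, the Sobolev estimate and $\phi_n\equiv1$ on $Q_{n+1}$ yields, for
\[
	Y_n=\fint_{Q_n}\big((v-k_n)_+^2+(v-k_n)_+\big)\dxdt,
\]
the iteration inequality
\[
	Y_{n+1}\le C\,b^n\,\frac{C_1(M)}{(1-\sigma)^2}\,\big(k^{-a_2}Y_n^{1+\frac{2}{N+2}}+k^{-a_1}Y_n^{1+\frac{1}{N+2}}\big),
\]
with $b=b(N)>1$ and positive exponents $a_1,a_2$ depending on $N$ and on the free parameter $q$, once the $M$-powers coming from the super-level-set estimates have been collected with $C(M)$ into the composite constant $C_1(M)=\big(1+(M^2+M)^{1/(N+2)}\big)\big(1+\max\{M^{1/m},M^{1-1/m}\}\big)$ (the second factor is the $M$-part of $C(M)$, the first is the contribution of the super-level bounds through $v\le M$ and the inequality $Y_n\le Y_0\le M^2+M$). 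The larger exponent $1+\tfrac2{N+2}$ originates from the $L^2$ term of Lemma~\ref{lem:energy} (Hölder producing a factor $\abs{A_{n+1}}^{2/(N+2)}$), the smaller $1+\tfrac1{N+2}$ from the $L^1$ term (a factor $\abs{A_{n+1}}^{(N+4)/(2(N+2))}$ after a square root of the energy).

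Finally, the standard fast geometric convergence lemma (see \cite{DaskalopoulosKenig}) forces $Y_n\to0$, hence $v\le k$ a.e.\ on $(x_0,t_0)+Q^-_{\sigma\rho,\sigma\rho^2}$, as soon as $Y_0$ lies below a threshold that is a negative power of $\tfrac{C_1(M)}{(1-\sigma)^2}$ times a positive power of $k$, with the relevant powers decided by which of the two terms in the iteration inequality dominates. Since $k_0=0$ we have $Y_0=\fint_{Q_0}\big((M-u^m)_+^2+(M-u^m)_+\big)\dxdt$, so solving the threshold inequality for the smallest admissible $k$ gives $\esssup_{(x_0,t_0)+Q^-_{\sigma\rho,\sigma\rho^2}}v\le k$ with $k$ equal to a power of $\tfrac{C_1(M)}{(1-\sigma)^2}$ times a power of $Y_0$. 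Optimizing the free exponent $q$, splitting into the cases $Y_0\le1$ (where the $L^1$-type term dominates) and $Y_0\ge1$ (where the $L^2$-type term dominates), and using $\tfrac{C_1(M)}{(1-\sigma)^2}\ge1$ to raise the two different exponents on this quantity to a common value, one obtains exactly the exponent $\tfrac{N+2}{4}$ on $\tfrac{C_1(M)}{(1-\sigma)^2}$ and the two branches $\delta^{1/(N+4)}$ and $\delta^{1/4}$ of $G$ appearing in the statement.

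The main obstacle is the simultaneous presence of the $L^2$ and $L^1$ terms in the energy estimate: they propagate through the Sobolev inequality and the super-level estimate with different scaling, so the iteration cannot be reduced to a single clean power of $Y_n$ — and the device that lets one nonetheless close it, namely trading one exponent for the other at the cost of the factor $1+(M^2+M)^{1/(N+2)}$, is precisely what forces the composite constant $C_1(M)$ and the non-power nonlinearity $G$. A secondary point demanding care is that $C(M)$ in Lemma~\ref{lem:energy} genuinely depends on $M$ — constants cannot be added to subsolutions of the porous medium equation — so this dependence has to be tracked explicitly through the iteration rather than scaled away, and it resurfaces inside $C_1(M)$.
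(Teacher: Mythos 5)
Your proposal runs the same De~Giorgi iteration as the paper and recovers the same final bookkeeping: the two branches of $G$, the exponent $\tfrac{N+2}{4}$, and the composite constant $C_1(M)$. There is, however, one step that is not correct as written. You state the iteration inequality with two distinct powers of $Y_n$,
\begin{equation*}
  Y_{n+1}\le C\,b^n\,\frac{C_1(M)}{(1-\sigma)^2}\Big(k^{-a_2}Y_n^{1+\frac{2}{N+2}}+k^{-a_1}Y_n^{1+\frac{1}{N+2}}\Big),
\end{equation*}
and then invoke the standard fast geometric convergence lemma (Lemma~\ref{lem:geomconv}), which is formulated for a \emph{single} power. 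The remark that ``the relevant powers are decided by which of the two terms dominates'' does not close the argument, because $Y_n$ changes size along the iteration, so neither term dominates uniformly. The paper resolves this \emph{before} iterating: since $0\le v\le M$ gives $Y_n\le M^2+M$, one writes $Y_n^{1+2/(N+2)}\le (M^2+M)^{1/(N+2)}\,Y_n^{1+1/(N+2)}$, which collapses the iteration to a single power $1+\tfrac{1}{N+2}$ at the cost of the factor $1+(M^2+M)^{1/(N+2)}$ — this is precisely the origin of that factor inside $C_1(M)$. You mention both ingredients ($Y_n\le M^2+M$ and the extra factor in $C_1(M)$), so you plainly have the idea, but as written you keep a two-term inequality with $C_1(M)$ already present, which is circular: that factor \emph{is} the price of discarding the second term.

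Once the iteration is reduced to a single power, the rest is as you describe: Lemma~\ref{lem:geomconv} gives a threshold for $Y_0$, one chooses $k$ to saturate it, and the case split is most naturally on $k<1$ versus $k\ge 1$ (which drives the two exponents $\tfrac{1}{N+4}$ and $\tfrac14$ on $Y_0$); finally $C_1(M)/(1-\sigma)^2\ge 1$ lets you raise both prefactors to the common exponent $\tfrac{N+2}{4}$. Your introduction of a free Chebyshev exponent $q$ is harmless but unnecessary — the paper fixes it to $2$, which already produces the exponents appearing in $G$.
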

\begin{remark}
	\label{remsup} Note that we can replace the cylinders $(x_0,t_0)+Q^-_{\sigma \rho, \sigma \rho^2}$ with time-symmetric cylinders, i.e. $(x_0,t_0)+Q_{\sigma \rho, \sigma \rho^2}$. 
\end{remark}
To prove Lemma \ref{lemsup} we need the following two fundamental lemmas.

\begin{lemma}
	[\cite{DiBe} p. 12] \label{lem:geomconv} Let $\{Y_n\}$, $n = 1,2,\ldots$, be a sequence of positive numbers satisfying 
	\begin{equation*}
		Y_{n+1} \leq C b^n Y_n^{1+\epsilon/2} 
	\end{equation*}
	where $C,b > 1$ and $\epsilon > 0$ are given numbers. Then if 
	\begin{equation*}
		Y_0 \leq C^{-\frac{2}{\epsilon}} b^{-\frac{4}{\epsilon^2}} 
	\end{equation*}
	we get $Y_n \to 0$ as $n \to \infty$. 
\end{lemma}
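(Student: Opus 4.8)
\emph{Proof proposal.} The plan is to upgrade the qualitative conclusion $Y_n\to 0$ to a quantitative one: I will show by induction on $n$ that $Y_n\le Y_0\lambda^n$ for a suitable contraction factor $\lambda\in(0,1)$. Once such a decay estimate is established, the conclusion is immediate, since $b>1$ and $\epsilon>0$ force $\lambda<1$, so $Y_0\lambda^n\to 0$.

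The first step is to determine the right $\lambda$ by reverse engineering. Inserting the ansatz $Y_n\le Y_0\lambda^n$ into the recursion $Y_{n+1}\le C b^n Y_n^{1+\epsilon/2}$ and demanding that the resulting bound be at most $Y_0\lambda^{n+1}$ leads, after collecting the powers of $b$ and $\lambda$, to the two requirements $(b\lambda^{\epsilon/2})^n\le 1$ for all $n$ and $C Y_0^{\epsilon/2}\le\lambda$. The first becomes an identity — hence harmless for every $n$ — precisely when $b\lambda^{\epsilon/2}=1$, i.e. $\lambda=b^{-2/\epsilon}$, which indeed lies in $(0,1)$. With this choice the second requirement reads $Y_0^{\epsilon/2}\le C^{-1}b^{-2/\epsilon}$, equivalently $Y_0\le C^{-2/\epsilon}b^{-4/\epsilon^2}$, which is exactly the hypothesis of the lemma. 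The induction then closes on the nose: the base case $n=0$ is trivial, and in the inductive step one writes $Y_0^{1+\epsilon/2}=Y_0\,Y_0^{\epsilon/2}\le Y_0 C^{-1}\lambda$ using the smallness of $Y_0$, substitutes into the recursion together with the inductive hypothesis, and uses $b\lambda^{\epsilon/2}=1$ to cancel the factor $b^n$ against $\lambda^{n\epsilon/2}$, obtaining $Y_{n+1}\le Y_0\lambda^{n+1}$. Letting $n\to\infty$ finishes the argument.

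I do not expect a genuine obstacle here: the only real content is the algebraic bookkeeping that pins down $\lambda=b^{-2/\epsilon}$, and the hypothesis on $Y_0$ has evidently been tailored so that the induction goes through exactly. The single point requiring care is the consistent handling of the exponents $\epsilon/2$, $2/\epsilon$ and $4/\epsilon^2$, and checking that all quantities involved stay positive so that raising to fractional powers is legitimate.
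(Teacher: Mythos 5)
Your proof is correct: the induction $Y_n\le Y_0\,b^{-2n/\epsilon}$ closes exactly under the hypothesis $Y_0\le C^{-2/\epsilon}b^{-4/\epsilon^2}$, and this is precisely the classical argument in DiBenedetto's book, which the paper cites for this lemma instead of giving a proof. No issues.
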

\begin{lemma}
	[\cite{DiBe} p. 9] \label{lem:paranormal-sobolev} There exists a constant $C = C(N) > 1$ such that if $u \in L^2(0,T;H^1_0(\Omega))$ then 
	\begin{equation*}
		\int_{\Omega_T} |u|^q \dxdt \leq C \left ( \int_{\Omega_T} |\grad u|^2 \dxdt \right ) \left ( \esssup_{0 < t < T} \int_{\Omega} u^2 \dx \right )^{2/N}, 
	\end{equation*}
	where $q = 2 \frac{N+2}{N}$. 
\end{lemma}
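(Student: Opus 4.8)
The plan is to reduce the parabolic inequality to the \emph{spatial} Sobolev inequality: at each fixed time slice one interpolates the $L^q$ norm between $L^2$ and the Sobolev exponent, trading the $L^2$ part for the essential supremum and the Sobolev part for the gradient, and then one integrates in time. Since $u\in L^2(0,T;H^1_0(\Omega))$, the slice $x\mapsto u(x,t)$ lies in $H^1_0(\Omega)$ for a.e.\ $t\in(0,T)$, so spatial inequalities may be applied for a.e.\ $t$.

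Assume first $N\geq 3$ and set $2^\ast=\tfrac{2N}{N-2}$. The Sobolev inequality on $H^1_0(\Omega)$ gives, for a.e.\ $t$,
\begin{equation*}
  \left(\int_\Omega |u(x,t)|^{2^\ast}\dx\right)^{2/2^\ast}\leq C(N)\int_\Omega |\grad u(x,t)|^2\dx.
\end{equation*}
Now I interpolate: writing $\tfrac1q=\tfrac{1-\theta}{2}+\tfrac{\theta}{2^\ast}$ with $q=\tfrac{2(N+2)}{N}$, a short computation gives $\theta=\tfrac{N}{N+2}$, hence $\theta q=2$ and $(1-\theta)q=\tfrac4N$, while $\theta q/2^\ast=2/2^\ast$. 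Hölder's inequality in $x$ then yields, for a.e.\ $t$,
\begin{equation*}
  \int_\Omega |u(x,t)|^q\dx\leq \left(\int_\Omega |u(x,t)|^2\dx\right)^{2/N}\left(\int_\Omega |u(x,t)|^{2^\ast}\dx\right)^{2/2^\ast}.
\end{equation*}
Bounding the first factor by $\left(\esssup_{0<t<T}\int_\Omega u^2\dx\right)^{2/N}$, which is independent of $t$, and the second by the Sobolev inequality above, I obtain
\begin{equation*}
  \int_\Omega |u(x,t)|^q\dx\leq C(N)\left(\esssup_{0<t<T}\int_\Omega u^2\dx\right)^{2/N}\int_\Omega |\grad u(x,t)|^2\dx
\end{equation*}
for a.e.\ $t$. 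Integrating over $t\in(0,T)$ and pulling the essential supremum factor outside the integral produces exactly the claimed inequality, with $\int_{\Omega_T}|\grad u|^2\dxdt$ on the right and $q=\tfrac{2(N+2)}{N}$.

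For the low-dimensional cases $N=1,2$ the exponent $2^\ast$ is unavailable, and I would instead invoke the Gagliardo--Nirenberg inequality $\norm{w}_{L^q(\Omega)}\leq C\norm{\grad w}_{L^2(\Omega)}^\theta\norm{w}_{L^2(\Omega)}^{1-\theta}$, valid for $w\in H^1_0(\Omega)$ on a bounded domain with the same $\theta=\tfrac{N}{N+2}$ and $q=\tfrac{2(N+2)}{N}$; raising to the power $q$ reproduces the pointwise-in-time bound above, after which the time integration is identical. The only point requiring care is the bookkeeping of the interpolation exponents --- verifying $\theta q=2$ and $(1-\theta)q=\tfrac4N$ --- and treating $N\le 2$ separately; there is no genuine analytic difficulty, which is why this is stated as a fundamental lemma quoted from \cite{DiBe}.
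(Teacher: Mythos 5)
Your proof is correct: the slice-wise H\"older interpolation of $L^q$ between $L^2$ and $L^{2^*}$ (with $\theta=N/(N+2)$, so $\theta q=2$ and $(1-\theta)q=4/N$), combined with the Sobolev inequality on $H^1_0(\Omega)$ and then integration in time, with the Gagliardo--Nirenberg inequality substituting for $2^*$ when $N\le 2$, is exactly the standard argument for this embedding. The paper does not prove the lemma but quotes it from \cite{DiBe}, and your argument is essentially the same as the proof given there, so there is nothing further to check.
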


\begin{proof}[Proof of Lemma \ref{lemsup}]
	Let 
	\begin{equation*}
		\rho_n = \sigma\rho + \frac{(1-\sigma)}{2^n}\rho, \quad\text{and} \quad \theta_n = \sigma\rho^2 + \frac{(1-\sigma)}{2^n}\rho^2. 
	\end{equation*}
	We define then the corresponding cylinders 
	\begin{displaymath}
		Q_n = (x_0,t_0)+Q^-_{\rho_n,\theta_n} = B_n \times (t_0-\theta_n,t_0)\quad\text{and}\quad Q_\infty = (x_0,t_0)+Q^-_{\sigma \rho, \sigma \rho^2}, 
	\end{displaymath}
	and denote 
	\begin{equation}
		\tilde Q_n = (x_0,t_0)+Q^-_{\frac{\rho_n + \rho_{n+1}}{2},\frac{\theta_n + \theta_{n+1}}{2}}. 
	\end{equation}
	We will use the levels $k_n = k - \frac{k}{2^n}$, where the number $k$ shall be fixed later. 
	
	Take a cutoff function $\zeta$ such that 
	\begin{equation*}
		\begin{cases}
			\zeta, \text{vanishes on the parabolic boundary of $Q_n$} \\
			\zeta = 1, \text{ in $\tilde Q_n$}, \\
			|\nabla \zeta| \leq \frac{2^{n+2}}{(1-\sigma) \rho}, \quad 0 \leq \zeta_t \leq \frac{2^{n+2}}{(1-\sigma) \rho^2}, 
		\end{cases}
	\end{equation*}
	and denote $v_n = (v-k_n)_+$. We aim at deriving an estimate for the mean of $v_{n+1}^2+v_{n+1}$ in terms of the mean of $v_n^2+v_n$ so that fast geometric convergence (Lemma \ref{lem:geomconv}) can be applied. The estimate \eqref{eqenergy} gives 
	\begin{align*}
		\int_{Q_n} |\nabla v_{n+1}|^2 \zeta^2 \dxdt &+ \esssup_{t_0-\theta_n t_0< t < t_0} \int_{B_n} v_{n+1}^2 \zeta^2 \dx \\
		&\leq C(M) \left (\frac{2^{n+2}}{(1-\sigma) \rho} \right )^2 \int_{Q_n} v_{n+1}^2 \dxdt + C \frac{2^{n+2}}{(1-\sigma) \rho^2} \int_{Q_n} v_{n+1} \dxdt\\
		&\leq C \frac{2^{2n}}{(1-\sigma)^2} \frac{C(M)}{\rho^2}\int_{Q_n} v_{n+1}^2 + v_{n+1} \dxdt. 
	\end{align*}
	Take then $\tilde \zeta \in C^\infty$ such that $\tilde \zeta = 1$ in $Q_{n+1}$ and vanishes on the parabolic boundary of $\tilde Q_n$, with the same bounds for the derivatives as for $\zeta$. We use the parabolic Sobolev embedding (Lemma \ref{lem:paranormal-sobolev}) to get 
	\begin{align}
		\int_{\tilde Q_n} &v_{n+1}^q \tilde \zeta^q \dxdt \notag \\
		&\leq C \left ( \esssup_{-\tilde \theta_n < t < 0} \int_{\tilde B_n} v_{n+1}^2 \dx \right )^{2/N} \left ( \int_{\tilde Q_n} |\grad v_{n+1}|^2 \tilde \zeta^2 \dxdt + \int_{\tilde Q_{n}} v_{n+1}^2 |\grad \tilde \zeta_n|^2 \dxdt \right ) \notag\\
		&\leq \left ( \frac{2^{2n}}{(1-\sigma)^2} \frac{C(M)}{\rho^2}\int_{Q_n} v_{n+1}^2 + v_{n+1} \dxdt \right )^{q/2}, 
	\end{align}
	where $q = 2 \frac{N+2}{N}$. Denote then 
	\begin{displaymath}
		A_{n+1} = \{(x,t) \in Q_{n+1} : v_{n+1}(x,t) > 0\}. 
	\end{displaymath}
	By H\"older's inequality we get 
	\begin{align*}
		\fint_{Q_{n+1}} v_{n+1}^2 \dxdt &\leq C \left (\fint_{Q_{n+1}} v_{n+1}^q \dxdt \right )^{2/q} \left (\frac{|A_{n+1}|}{|Q_{n+1}|} \right )^{1-2/q}. 
	\end{align*}
	Further, we have 
	\begin{align}
		\frac{|A_{n+1}|}{|Q_{n+1}|} \leq C \frac{|\{[u_n>k_{n+1}-k_n = \frac{k}{2^{n+1}}] \cap Q_{n} \}|}{|Q_{n}|} \leq C \left (\frac{2^{n+1}}{k} \right )^2 \fint_{Q_{n}} v_n^2 \dxdt. 
	\end{align}
	We combine the previous estimates to get 
	\begin{align}
		\label{vn2} \fint_{Q_{n+1}} &v_{n+1}^2 \dxdt \notag\\
		&\leq C |Q_{n+1}|^{1-2/q} \left ( \frac{2^{2n}}{(1-\sigma)^2} \frac{C(M)}{\rho^2}\fint_{Q_n} v_n^2 + v_n \dxdt \right ) \left (\left (\frac{2^{n+1}}{k} \right )^2 \fint_{Q_{n}} v_n^2 +v_n \dxdt \right )^{1-2/q} \notag\\
		&\leq C \frac{C(M) 4^{n\frac{q-1}{q}}}{(1-\sigma)^2} k^{\frac{2(2-q)}{q}} \left ( \fint_{Q_{n}} v_n^2 +v_n \dxdt \right )^{1+\epsilon}. 
	\end{align}
	Note that $2-2/q = 1+2/(N+2)$.
	
	To obtain an iterative estimate we still need to bound the mean of $v_{n+1}$. To this end, we estimate 
	\begin{align}
		\label{vn1} \fint_{Q_{n+1}} v_{n+1} \dxdt &\leq \fint_{Q_{n+1}} v_{n+1} v_n \frac{2^{n+1}}{k} \dxdt \notag\\
		&\leq \frac{2^{n+1}}{k} \left ( \fint_{Q_{n+1}} v_{n+1}^2 \dxdt \right )^{1/2}\left ( \fint_{Q_{n+1}} v_n^2 \dxdt \right )^{1/2}. 
	\end{align}
	Note that in $A_{n+1}$ we know that $v_n > k_{n+1}-k_n =
        \frac{k}{2^{n+1}}$. Notice now that the first term on the
        right hand side in \eqref{vn1} is bounded by means of
        \eqref{vn2} and the second term is already essentially what we want. Thus 
	\begin{align}
		\label{vn1+} \fint_{Q_{n+1}} v_{n+1} \dxdt &\leq \frac{2^{n+1}}{k} \left ( C \frac{C(M) 4^{n\frac{q-1}{q}}}{(1-\sigma)^2} k^{\frac{2(2-q)}{q}} \right )^{1/2} \left ( \fint_{Q_{n}} v_n^2 +v_n \dxdt \right )^{1+\epsilon/2} \notag\\
		&\leq \frac{b^n C(M)}{(1-\sigma)^2} k^{2 \frac{1-q}{q}} \left ( \fint_{Q_{n}} v_n^2 +v_n \dxdt \right )^{1+\epsilon/2}. 
	\end{align}
	Let now 
	\begin{equation} \label{eq:vnit}
		Y_n = \fint_{Q_n} v_n^2+v_n \dxdt \quad\text{for}\quad n=0,1,2,\ldots. 
	\end{equation}
	To counter the discrepancy between the power $1+\epsilon/2$ in
        \eqref{vn1+} and $1+\epsilon$ in \eqref{vn2} we note that $v_n
        \leq v \leq M$ and we get 
	\begin{equation*}
		Y_{n+1} \leq C \left (1+\frac{1}{k^{1/q}} \right ) \frac{C_1(M) b^{n}}{(1-\sigma)^2} k^{\frac{2(2-q)}{q}} Y_n^{1+\epsilon/2}.
	\end{equation*}
	Here $\epsilon = 2/(N+2)$,  $b = b(N) > 1$, and $C_1(M) = \Big [1+\big [M^{2}+M \big ]^{\epsilon / 2} \Big ] C(M)$. By fast geometric convergence (Lemma \ref{lem:geomconv}) we see that if 
	\begin{equation}
		\label{eq:iteriter} Y_0 \leq \frac{1}{C} \left ( \frac{C_1(M)}{(1-\sigma)^2} \left (1+\frac{1}{k^{1/q}} \right ) k^{\frac{2(2-q)}{q}} \right)^{-2/\epsilon}, 
	\end{equation}
	we have 
	\begin{displaymath}
		\fint_{Q_\infty} (v-k)_+^2+(v-k)_+\dif x\dif t=\lim_{n\to \infty} Y_n=0. 
	\end{displaymath}
	Thus we obtain the estimate 
	\begin{equation*}
		v \leq (v-k)_++k\leq k, 
	\end{equation*}
	almost everywhere in $Q_{\infty}$. The right hand side in \eqref{eq:iteriter} is increasing in $k$, so we see that there exists a unique $k$ such that 
	\begin{equation}
		\label{eqY0k} Y_0 = \frac{1}{C} \left ( \frac{C_1(M)}{(1-\sigma)^2} \left (\frac{k^{1/q}+1}{k^{1/q}} \right ) k^{\frac{2(2-q)}{q}} \right)^{-2/\epsilon}. 
	\end{equation}
	To relate this value of $k$ to the size of $Y_0$ we do as
        follows. First rewrite \eqref{eqY0k} as
	\begin{equation*}
		\left (\frac{k^{1/q}}{k^{1/q}+1} \right ) k^{2\epsilon} = C \frac{C_1(M)}{(1-\sigma)^2} Y_0^{\epsilon/2}. 
	\end{equation*}
	Then $k$ is bounded from above by $Y_0$ as follows. If $0 \leq k < 1$ then 
	\begin{equation*}
		k^{2 \epsilon + \frac{1}{q}} \leq C \frac{C_1(M)}{(1-\sigma)^2} Y_0^{\epsilon/2}, 
	\end{equation*}
	giving 
	\begin{equation}
		\label{eqksmall} k \leq C \left (\frac{C_1(M)}{(1-\sigma)^2} Y_0^{\epsilon/2}\right )^{\frac{1}{1+\epsilon}}. 
	\end{equation}
	On the other hand if $k \geq 1$ then we get 
	\begin{equation}
		\label{eqklarge} k \leq C \left (\frac{C_1(M)}{(1-\sigma)^2} Y_0^{\epsilon/2} \right )^{\frac{1}{2\epsilon}}. 
	\end{equation}
	The two cases \eqref{eqksmall} and \eqref{eqklarge} give
	\begin{equation*}
		k \leq C \left ( \frac{C_1(M)}{(1-\sigma)^2} \right )^{\frac{1}{2\epsilon}} \max \left \{ Y_0^{\frac{1}{N+4}},\, Y_0^{\frac{1}{4}} \right \}.
	\end{equation*}
	Recalling that $G(\delta) =
        \max\{\delta^{\frac{1}{N+4}},\delta^{\frac{1}{4}}\} $, the
        lemma follows.
\end{proof}

\section{Proof of Theorem \ref{thm:lsc:intro}} 

\label{sec:lower_semicontinuity}

Lower semicontinuity of weak supersolutions is now a fairly straightforward consequence of Lemma \ref{lemsup}. We define the \emph{lower semicontinuous regularization} $v^\ast$ of a function $v$ by 
\begin{equation*}
	v^\ast(x,t):=\essliminf_{(y,s) \to (x,t)} v(y,s) = \lim_{r \to 0} \essinf_{(x,t)+Q_{r,r^2}} v. 
\end{equation*}
An elementary argument shows that $v^\ast$ is lower semicontinuous. 

It is enough to prove that the function $u^m$, has a lower semicontinuous representative. To see this note that since $f(x) = x^m$ is strictly increasing and continuous we get that 
\begin{equation*}
	\essinf_{(x,t)+Q_{r,r^2}} u^m = (\essinf_{(x,t)+Q_{r,r^2}} u)^m 
\end{equation*}
and 
\begin{equation*}
	\essliminf_{(y,s) \to (x,t)} u^m = (\essliminf_{(y,s) \to (x,t)} u)^m. 
\end{equation*}
Therefore we prove the following version of Theorem \ref{thm:lsc:intro}.
\begin{theorem}
	\label{thm:lsc} Let $u$ be a nonnegative weak supersolution for $m > 1$ in $\Omega_{t_1,t_2}$, and let 
	\begin{displaymath}
		v=u^m. 
	\end{displaymath}
	Then 
	\begin{displaymath}
		v(x,t)=v^\ast(x,t) 
	\end{displaymath}
	at all Lebesgue points of $v$ such that $v(x,t)<\infty$. In particular, $v$ has a lower semicontinuous representative. 
\end{theorem}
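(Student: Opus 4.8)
The plan is to show the two inequalities $v(x,t) \le v^\ast(x,t)$ and $v(x,t) \ge v^\ast(x,t)$ at every Lebesgue point $(x,t)$ of $v$ with $v(x,t) < \infty$. One direction is essentially the Lebesgue point property and holds for general functions; the other is where the supersolution structure — via the $L^\infty$ estimate of Lemma \ref{lemsup} — is needed.

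First I would record that for a Lebesgue point $(x_0,t_0)$ of $v$ one has, by definition of the Lebesgue point and a routine splitting argument,
\begin{displaymath}
	v(x_0,t_0) = \lim_{r\to 0} \fint_{(x_0,t_0)+Q_{r,r^2}} v \dxdt \ge \lim_{r\to 0} \essinf_{(x_0,t_0)+Q_{r,r^2}} v = v^\ast(x_0,t_0),
\end{displaymath}
so the inequality $v(x_0,t_0) \ge v^\ast(x_0,t_0)$ is immediate and requires nothing about the equation. (One should note that $v^\ast$ is a genuine representative of $v$: since almost every point is a Lebesgue point, $v = v^\ast$ a.e. once the reverse inequality is known a.e., which it is because $v^\ast \le v$ trivially fails only on a null set — I would phrase this carefully at the end.)

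For the reverse inequality $v(x_0,t_0) \le v^\ast(x_0,t_0)$, fix $M > v(x_0,t_0)$; it suffices to show $v^\ast(x_0,t_0) \ge$ something tending to $v(x_0,t_0)$, or more directly to show $\essinf_{(x_0,t_0)+Q_{\sigma\rho,\sigma\rho^2}} v \ge v(x_0,t_0) - o(1)$. Apply Lemma \ref{lemsup} (with the time-symmetric cylinders of Remark \ref{remsup}) to the function $(M - v)_+ = (M - u^m)_+$ at the point $(x_0,t_0)$: for a fixed $\sigma \in (0,1)$,
\begin{displaymath}
	\esssup_{(x_0,t_0)+Q_{\sigma\rho,\sigma\rho^2}} (M - v)_+ \le C\bigg[\frac{C_1(M)}{(1-\sigma)^2}\bigg]^{\frac{N+2}{4}} G\!\left(\fint_{(x_0,t_0)+Q_{\rho,\rho^2}} (M-v)_+^2 + (M-v)_+ \dxdt\right).
\end{displaymath}
Since $(x_0,t_0)$ is a Lebesgue point of $v$ and $(M-v)_+ \le M$ is bounded, the average on the right converges as $\rho \to 0$ to $(M - v(x_0,t_0))_+^2 + (M - v(x_0,t_0))_+ = (M - v(x_0,t_0))^2 + (M - v(x_0,t_0))$, because $M > v(x_0,t_0)$. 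Therefore, taking $\rho \to 0$,
\begin{displaymath}
	\limsup_{\rho\to 0} \;\esssup_{(x_0,t_0)+Q_{\sigma\rho,\sigma\rho^2}} (M-v)_+ \le C\bigg[\frac{C_1(M)}{(1-\sigma)^2}\bigg]^{\frac{N+2}{4}} G\big((M - v(x_0,t_0))^2 + (M - v(x_0,t_0))\big),
\end{displaymath}
and the right-hand side is a fixed finite quantity. From $\esssup (M-v)_+ \ge M - \essinf v$ on each cylinder we get $\essinf_{(x_0,t_0)+Q_{\sigma\rho,\sigma\rho^2}} v \ge M - (\text{that quantity})$, hence $v^\ast(x_0,t_0) \ge M - C[C_1(M)/(1-\sigma)^2]^{(N+2)/4} G(\cdots)$.

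The final — and main — step is to let $M \downarrow v(x_0,t_0)$. Then $(M - v(x_0,t_0))^2 + (M - v(x_0,t_0)) \to 0$, so $G$ of that quantity tends to $0$ (since $G(\delta) = \max\{\delta^{1/(N+4)},\delta^{1/4}\} \to 0$ as $\delta \to 0$), while the prefactor $C[C_1(M)/(1-\sigma)^2]^{(N+2)/4}$ stays bounded as $M \to v(x_0,t_0)$ (it depends continuously on $M$ for $M$ in a bounded range, using $v(x_0,t_0) < \infty$). Hence the whole correction term vanishes and we conclude $v^\ast(x_0,t_0) \ge v(x_0,t_0)$. The delicate point to get right is precisely this interplay: the constant $C_1(M)$ blows up as $M \to \infty$, but here $M$ stays near the \emph{finite} value $v(x_0,t_0)$, so the blow-up is harmless; and although $G$ is only Hölder-type near $0$, it does vanish there, which is exactly why the argument closes. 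I expect that balancing these two effects — ensuring the product of a bounded prefactor and a vanishing $G$-term genuinely goes to zero — is the one place where care is needed, together with the bookkeeping that turns the a.e. identity $v = v^\ast$ into the pointwise statement at Lebesgue points and the lower semicontinuity of $v^\ast$ (which is elementary and already noted in the text). Combining both inequalities gives $v(x_0,t_0) = v^\ast(x_0,t_0)$ at every Lebesgue point with $v(x_0,t_0) < \infty$, and since $v^\ast$ is lower semicontinuous and agrees with $v$ a.e., it is the desired representative.
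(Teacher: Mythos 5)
Your proposal is correct and rests on the same ingredients as the paper's proof: the inequality $v^\ast(x_0,t_0)\le v(x_0,t_0)$ from the Lebesgue-point property, and the reverse inequality from Lemma \ref{lemsup} with the level $M$ tied to $v(x_0,t_0)$. The one genuine difference is in how you set the level. You fix $M>v(x_0,t_0)$, let $\rho\to 0$ so the average on the right of the lemma converges to $(M-v(x_0,t_0))^2+(M-v(x_0,t_0))>0$ (using that $(M-v)_+$ is Lipschitz in $v$ and bounded by $M$, so the Lebesgue point of $v$ transfers), and then take a second limit $M\downarrow v(x_0,t_0)$ while tracking that $C_1(M)$ stays bounded near the finite value $v(x_0,t_0)$. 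The paper instead takes $M=v(x_0,t_0)$ directly: then $(M-v)_+\le v(x_0,t_0)$ gives $\fint(M-v)_+^2\le v(x_0,t_0)\fint(M-v)_+$, and $\fint(M-v)_+\le\fint|v(x_0,t_0)-v|\to 0$ by the Lebesgue point property, so the argument $\delta$ of $G$ tends to zero in a single limit $\rho\to 0$ and no limit in $M$ is needed. Both close; the paper's choice simply collapses the two limits into one and dispenses with the continuity-of-$C_1$ bookkeeping. Your side remark that one must verify the prefactor does not blow up is exactly the point the paper sidesteps by this choice. One small expository slip: the parenthetical ``$v^\ast\le v$ trivially fails only on a null set'' is garbled — the intended statement, which you do establish, is that $v^\ast\le v$ holds at every Lebesgue point (hence a.e.) directly from the definition, and the reverse holds at Lebesgue points where $v<\infty$, together giving $v=v^\ast$ a.e.; that should be stated plainly rather than as a double negative.
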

\begin{proof}
	Let $E$ be the set of Lebesgue points of $v=u^m$, i.e. 
	\begin{equation*}
		E = \left \{(x,t) \in \Omega_{t_1,t_2}: \lim_{r \to 0} \fint_{(x,t)+Q_{r,r^2}} |u^m(x,t)-u^m(y,s)| dy ds = 0 \right \}. 
	\end{equation*}
	Further, let
        \begin{displaymath}
          O = \{(x,t) \in \Omega_{t_1,t_2}: v(x,t) < \infty \}.
        \end{displaymath}
  We wish to show that if $(x_0,t_0) \in E \cap O$ then $v^\ast(x_0,t_0) = v(x_0,t_0)$. Note that by the summability of $v$ we get that $|E \cap O| = |\Omega_{t_1,t_2}|$.
	
	First of all if $(x_0,t_0) \in E \cap O$ then 
	\begin{equation*}
		v^\ast (x_0,t_0) \leq \lim_{r \to 0} \fint_{(x_0,t_0)+Q_{r,r^2}} v \dxdt=v(x_0,t_0). 
	\end{equation*}
	
	For the other inequality, observe first that 
	\begin{displaymath}
		v(x_0,t_0)-\essinf_{(x_0,t_0)+Q_{\sigma r,\sigma r^2}}v=\esssup_{(x_0,t_0)+Q_{\sigma r,\sigma r^2}}(v(x_0,t_0)-v)\leq \esssup_{(x_0,t_0)+Q_{\sigma r,\sigma r^2}}(v(x_0,t_0)-v)_+. 
	\end{displaymath}
	Then take $M = v(x_0,t_0)$ in Lemma \ref{lemsup} to get 
	\begin{align}
		\label{eq1} \esssup_{(x_0,t_0)+Q_{\sigma r,\sigma r^2}} &(v(x_0,t_0)-v)_+ \notag \\
		&\leq C \bigg [\frac{C_1(M)}{(1-\sigma)^2} \bigg ]^{\frac{N+2}{4}} G \left ( \fint_{(x_0,t_0)+Q(r,r^2)} (v(x_0,t_0)-v)_+^2 + (v(x_0,t_0)-v)_+ \dxdt \right ) 
	\end{align}
	for $r > 0$ small enough, such that $(x_0,t_0)+Q(r,r^2) \Subset \Omega_{t_1,t_2}$. We have 
	\begin{displaymath}
		\fint_{(x_0,t_0)+Q_{r,r^2}} (v(x_0,t_0)-v)_+^2\dif x\dif t\leq v(x_0,t_0) \fint_{(x_0,t_0)+Q_{r,r^2}}(v(x_0,t_0)-v)_+\dif x\dif t. 
	\end{displaymath}
	Since $(x_0,t_0)$ is a Lebesgue point of $v$, we also get 
	\begin{displaymath}
		\fint_{(x_0,t_0)+Q_{r,r^2}}(v(x_0,t_0)-v)_+\dif x\dif t\leq \fint_{(x_0,t_0)+Q_{r,r^2}}\abs{v(x_0,t_0)-v}\dif x\dif t \to 0 \quad \text{as}\quad r\to 0. 
	\end{displaymath}
	Recalling the fact that $v(x_0,t_0)<\infty$, the previous two inequalities imply that the right hand side in \eqref{eq1} tends to zero as $r \to 0$. It follows that 
	\begin{displaymath}
		v(x_0,t_0)-v^\ast (x,t)\leq 0, 
	\end{displaymath}
	as desired. 
\end{proof}


\end{document}